\newtheorem{thm}{Theorem}[section]
\newtheorem{lem}[thm]{Lemma}
\theoremstyle{definition}
\theoremstyle{remark}
\newtheorem{rem}[thm]{Remark}
\begin{document}

\title{On the Covering Densities of Quarter-Convex Disks}

\author{Kirati Sriamorn, Fei Xue}

\maketitle

\begin{abstract}
  It is conjectured that for every convex disks $K$, the translative covering density of $K$ and the lattice covering density of $K$ are identical. It is well known that this conjecture is true for every centrally symmetric convex disks. For the non-symmetric case, we only know that the conjecture is true for triangles \cite{januszewski}. In this paper, we prove the conjecture for a class of convex disks (quarter-convex disks), which includes all triangles and convex quadrilaterals.
\end{abstract}

\section{Introduction}

An $n$-dimensional \emph{convex body} is a compact convex subset of $\mathbb{R}^n$ with an interior point. A 2-dimensional convex body is call a \emph{convex disk}. The measure of a set $S$ will be denoted by $|S|$. The closure and the interior of $S$ are denoted by $\overline{S}$ and $Int(S)$, respectively. The cardinality of $S$ is denoted by $card\{S\}$.

The \emph{lower density} of a family $\mathcal{F}=\{K_1,K_2,\ldots\}$ of convex disks with respect to a bounded domain $D$ is defined as
$$d_{-}(\mathcal{F},D)=\frac{1}{|D|}\sum_{K\in\mathcal{F}, K\subset D}|K|.$$
We define the \emph{lower density} of the family $\mathcal{F}$ by
$$d_{-}(\mathcal{F})=\liminf_{r\rightarrow \infty} d_{-}(\mathcal{F},rB^2),$$
where $B^2$ denotes the unit disk centered at the origin.

\begin{rem}\label{balltosquare}
We can also define $d_{-}(\mathcal{F})$ by using $I^2$ instead of $B^2$, where $I^2$ denotes the square $[-1,1]\times [-1,1]$.
\end{rem}

A family $\mathcal{F}=\{K_i\}$ of convex disks is called a \emph{covering} of a domain $D\subseteq\mathbb{R}^2$, if $D\subseteq \bigcup_{i} K_i$. A family $\mathcal{F}=\{K_i\}$ is called a \emph{packing} of $D$, if $\bigcup_{i}K_i\subseteq D$ and $Int(K_i)\cap Int(K_j)=\emptyset$, for $i\neq j$. A family $\mathcal{F}$ which is both a packing and a covering of $D$ is called a \emph{tiling} of $D$.

Suppose that $x\in \mathbb{R}^2$, and $K$ is a convex disk, we define
$$K+x=\{y+x ~:~y\in K\}.$$
Let $X$ be a discrete set of $\mathbb{R}^2$. Denote by $K+X$ the family $\{K+x\}_{x\in X}$. We call $K+X$ a \emph{translative covering} (of $\mathbb{R}^2$ with copies of K), if
$K+X=\mathbb{R}^2$.
In addition, if $X$ is a lattice, we call $K+X$ a \emph{lattice covering}.

The \emph{translative covering density} $\vartheta_T(K)$ of convex disk $K$ is defined by the formula
$$\vartheta_T(K)=\inf_{\mathcal{F}}d_{-}(\mathcal{F}),$$
the infimum being taken over all translative coverings $\mathcal{F}$ with $K$. The \emph{lattice covering density} $\vartheta_L(K)$ is defined similarly by the formula
$$\vartheta_L(K)=\inf_{\mathcal{F}_L}d_{-}(\mathcal{F}_L),$$
the infimum being taken over all lattice coverings $\mathcal{F}_L$ with $K$. These infima are attained. By definition, it is obvious that
\begin{equation}\label{translation and lattice densities}
\vartheta_T(K)\leq \vartheta_L(K).
\end{equation}

Let $m(K,l)$ denote the minimal number of translative copies of $K$, of which the union can cover $lI^2$.  It can be deduced that
\begin{equation}\label{global_to_local}
\vartheta_T(K)=\liminf_{l\rightarrow\infty}\frac{m(K,l)|K|}{|lI^2|}.
\end{equation}
In 1950 F\'{a}ry studied lattice coverings \cite{rogers}, with a convex disk $K$, which is not necessarily symmetrical.
F\'{a}ry's results immediately imply that
\begin{equation*}
\vartheta_{L}(K)=\frac{|K|}{|h_s(K)|},
\end{equation*}
for each convex disk $K$, where $h_s(K)$ denotes the \emph{largest centrally symmetric hexagon} contained in $K$. He proved also that, for each
convex disk $K$,
$$\vartheta_{L}(K)\leq \frac{3}{2},$$
with equality only when $K$ is a triangle. Several years later, L. Fejes T\'{o}th proved \cite{fejes} that for every convex disk $K$ we have
$$ \vartheta_{T}(K)\geq\frac{|K|}{|h(K)|},$$
where $h(K)$ denotes a hexagon of maximum area which is contained in $K$. By a theorem of Dowker \cite{dowker}, if $K$ is centrally symmetric, then $h(K)$ can be obtained by inscribing in $K$ a \emph{centrally symmetric} hexagon. Since translates of any centrally symmetric hexagon can be arranged to form a plane lattice tiling, it immediately follows that for each centrally symmetric convex disk $K$, we have
\begin{equation}\label{Fejes results}
\vartheta_{T}(K)=\vartheta_{L}(K)=\frac{|K|}{|h(K)|}.
\end{equation}
Bambah and Rogers \cite{brass_moser_pach} conjectured that $\vartheta_{T}(K)=\vartheta_{L}(K)$ holds for any convex disks $K$.
Unfortunately, we know very little about the non-symmetric case.
It was not even known whether the conjecture is true for triangles \cite{brass_moser_pach}, until J.Januszewski \cite{januszewski} has shown in 2010 that for any triangles $T$, $\vartheta_T(T)=\frac{3}{2}$.

Now we will define a class of convex disks, which includes all triangles and convex quadrilaterals (by using affine transformation). Let $f$ be a convex and non-increasing continuous function with $f(0)=1$ and $f(1)\geq 0$. We define the convex disk $K_f$ by
$$K_f=\{(x,y) : 0\leq x\leq 1~,~0\leq y\leq f(x)\}.$$
For a convex disk $K$, we call $K$ a \emph{quarter-convex disk}, if $K$ and $K_f$ are affinely equivalent, for some $f$. Throughout this paper, we assume that $K=K_f$ and let
$$S^v_K=\{(0,y) : 0\leq y\leq 1\},~S^h_K=\{(x,0) : 0\leq x\leq 1\},$$
and
$$C_K=\overline{\partial K\setminus (S^v_K\cup S^h_K)},$$
where $\partial K$ is the boundary of $K$.
\begin{figure}[ht]
  \centering
  \includegraphics[width=130pt]{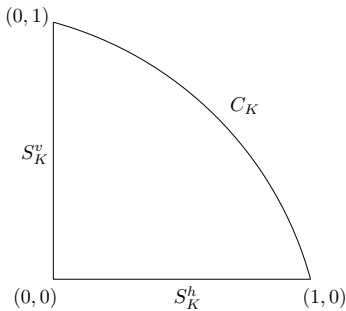}\\
  \caption{the convex disk $K$}\label{K}
\end{figure}

We will prove the following theorem

\begin{thm}\label{main theorem}
For each quarter-convex disk $K$, $$\vartheta_T(K)=\vartheta_L(K).$$
\end{thm}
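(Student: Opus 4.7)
The plan is to prove the non-trivial inequality $\vartheta_T(K)\ge \vartheta_L(K)$, since the reverse is exactly \eqref{translation and lattice densities}. By F\'ary's formula $\vartheta_L(K)=|K|/|h_s(K)|$ and the reformulation \eqref{global_to_local}, the task reduces to showing
$$m(K,l)\ge (1-o(1))\frac{|lI^2|}{|h_s(K)|}\qquad\text{as }l\to\infty.$$

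The approach is to start from an almost optimal translative covering $\{K+x_i\}_{i=1}^{m}$ of $lI^2$ and construct, for each $i$, a centrally symmetric hexagon $P_i\subseteq K$ such that the translated family $\{P_i+x_i\}_{i=1}^{m}$ is a packing of a region of area at least $|lI^2|-O(l)$. Since every centrally symmetric hexagon inscribed in $K$ has area at most $|h_s(K)|$, such a packing gives $m\cdot|h_s(K)|\ge |lI^2|-O(l)$; dividing by $|lI^2|$ and letting $l\to\infty$ yields the required lower bound.

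The quarter-convex structure is the main lever in constructing the $P_i$. The two straight edges $S^v_K$ and $S^h_K$ provide canonical directions in which to locate neighbours of $K+x_i$ in the covering, while the convex curve $C_K$ furnishes the remaining, ``curved'' pair of opposite vertices. Concretely, for each $x_i$ I would identify three pairs of \emph{opposite} neighbours (one pair along $S^v_K$, one along $S^h_K$, and one across $C_K$) and define $P_i$ as the centrally symmetric hexagon whose three pairs of parallel edges perpendicularly bisect the vectors from $x_i$ to the respective neighbours. Central symmetry is forced by construction, and containment $P_i\subseteq K$ will follow from the fact that each translate $K+x_j$ of a neighbour must cover a large portion of $K+x_i$ thanks to the covering property and the rigidity of the two straight edges.

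The main obstacle will be establishing that a globally consistent neighbour assignment can be made so that the hexagons $P_i+x_i$ have pairwise disjoint interiors, i.e.\ genuinely form a packing; this is where the interplay between the convexity of $C_K$ and the straight edges $S^v_K,S^h_K$ must be exploited carefully, and without the straight edges it is unclear how to pair neighbours so as to obtain \emph{centrally symmetric} hexagons (which is exactly what is needed to sharpen the Fejes T\'oth bound $|K|/|h(K)|$ to $|K|/|h_s(K)|$). A secondary issue is the handling of translates near $\partial(lI^2)$, where some of the expected neighbours are missing; since the number of such translates is $O(l)$, they contribute only an acceptable $o(|lI^2|)$ error as $l\to\infty$, and in view of Remark \ref{balltosquare} the choice of the reference domain $lI^2$ is immaterial.
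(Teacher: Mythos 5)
Your reduction to showing $m(K,l)\ge(1-o(1))\,|lI^2|/|h_s(K)|$ is sound, but the heart of the proposal --- inscribing in each translate a centrally symmetric hexagon $P_i$ so that the sets $P_i+x_i$ pack and still account for all but $O(l)$ of the area of $lI^2$ --- is precisely the step that cannot be carried out cell by cell, and you have not supplied the idea that replaces it. A packing by sets $P_i+x_i\subseteq K+x_i$ of total area $|lI^2|-O(l)$ is essentially a tiling, so you are asking for a decomposition of $lI^2$ into cells, one per translate, each of which fits inside a centrally symmetric hexagon inscribed in $K$ and hence has area at most $|h_s(K)|$. For a general (non-lattice) covering there is no reason such a decomposition exists: the natural cell of a given translate can be forced to be large. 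This is visible in the paper's own construction, where the cell $S_i$ carved out of $K_i$ by the cut operator $\ominus$ is a $(l_i,r_i,0,0)$ stair polygon whose number of steps $r_i$ can exceed $1$, so that its area can exceed $A(1)=|K|/\vartheta_L(K)=|h_s(K)|$. The paper never bounds the individual cells; it proves the \emph{global} combinatorial inequality $\sum_i r_i\le N-1$ (Lemma \ref{average not more than}) and then exploits the concavity of the maximal-area function $A(r)$ to get $\sum_i|S_i|\le\sum_i A(r_i)\le N\,A\bigl(\tfrac{N-1}{N}\bigr)\le N\,A(1)$. That averaging mechanism is the missing idea in your proposal: a per-translate bound by $|h_s(K)|$ is strictly stronger than what an arbitrary covering configuration permits, and nothing in your sketch rules out translates whose unavoidable share of $lI^2$ exceeds $|h_s(K)|$.

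There are also concrete problems with the proposed construction itself. Perpendicular bisectors of the vectors to neighbours produce Dirichlet-type cells, which are adapted to packings, not coverings; for a covering there is no reason the bisector cell of $x_i$ is contained in $K+x_i$, nor that the relevant neighbours number exactly six and fall into three antipodal pairs about $x_i$ --- and without genuine antipodality the bisector hexagon is not centrally symmetric. The paper's substitute is the order-dependent cut operator $K'\ominus K''$, tailored to the straight edges $S^v_K$, $S^h_K$ and the convex arc $C_K$, which always yields stair-polygon cells that tile (Lemma \ref{stairtileplane}); the price of admitting many-stepped cells is then paid by the averaging argument above, not by a pointwise hexagon bound.
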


\section{Some Definitions and Lemmas}
Before proving the main theorem, we will first give some useful definitions and lemmas.
\subsection*{Cut Operator $\ominus$}
When $K'=K+u$, we denote by $C_{K'}$ the curve $C_K+u$ and denote by $S^h_{K'}$ and $S^v_{K'}$ the line segments $S^h_K+u$ and $S^v_K+u$, respectively.

Let $K_1=K+u_1$ and $K_2=K+u_2$, where $u_1\neq u_2$. Suppose that $K_1\cap K_2\neq \emptyset$. If $C_{K_1}\cap C_{K_2}=\emptyset$, then there is exactly one $i\in\{1,2\}$ such that $C_{K_i}\cap K_1\cap K_2\neq \emptyset$, and we can define
$$K_i\sqcap K_j=K_1\cap K_2$$
and
$$K_j\sqcap K_i=\emptyset,$$
where $\{i,j\}=\{1,2\}$.

If $C_{K_1}\cap C_{K_2}\neq\emptyset$, then $S^v_{K_i}\cap S^h_{K_j}\neq \emptyset$, where either $(i,j)=(1,2)$ or $(i,j)=(2,1)$.
We define
$$K_i\sqcap K_j=K_1\cap K_2\cap L_{x_0}$$
and
$$K_j\sqcap K_i=K_1\cap K_2\cap R_{x_0},$$
where
$$x_0=\min\{x : (x,y)\in C_{K_1}\cap C_{K_2}\},$$
and $L_{x_0}$, $R_{x_0}$ are the half planes $\{(x,y) : x<x_0\}$ and $\{(x,y) : x\geq x_0\}$, respectively.

\begin{figure}[ht]
  \centering
  \includegraphics[width=300pt]{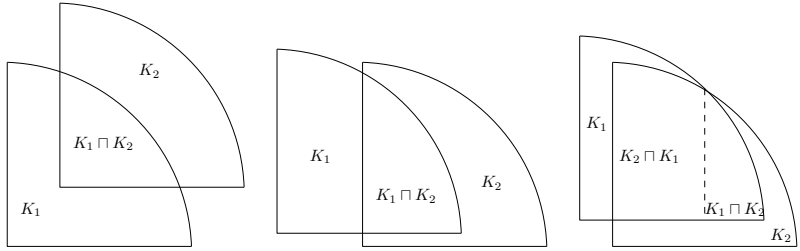}\\
  \caption{$K_i\sqcap K_j$}\label{squarecapoperator}
\end{figure}

Obviously, we have
$$(K_1\sqcap K_2)\cap (K_2\sqcap K_1)=\emptyset,$$
and
$$(K_1\sqcap K_2)\cup (K_2\sqcap K_1)=K_1\cap K_2.$$

\begin{rem}\label{conditioninsquare}
By the definition, one can show that for all $u\in C_{K_1}$, if $u\in K_2\setminus C_{K_2}$, then $u\in K_1\sqcap K_2$.
\end{rem}

\begin{rem}\label{toprovesquare}
Let $(x,y)\in K_1\cap K_2$, and $y_i=\max \{z : (x,z)\in C_{K_i}\}$, $i=1,2$.
If $(x,y)\in K_1\sqcap K_2$, then $y_1\leq y_2$. Conversely, if $y_1<y_2$, then
$(x,y)\in K_1\sqcap K_2$. (see Figure \ref{y1lessthany2})
\end{rem}

\begin{figure}[ht]
  \centering
  \includegraphics[width=130pt]{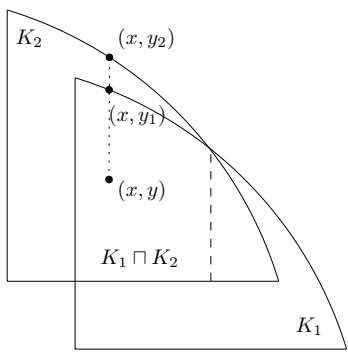}\\
  \caption{$(x,y_1)$ and $(x,y_2)$}\label{y1lessthany2}
\end{figure}

\begin{rem}\label{toprovesquare2}
Let $(x,y)\in K_1\cap K_2$, and $y_i=\max \{z : (x,z)\in C_{K_i}\}$, $i=1,2$. If $y_1=y_2$, then we have $(x,y)\in K_1\sqcap K_2$ if and only if $K_1\cap R_x\subset K_2\cap R_x$. (see Figure \ref{y1equaly2})
\end{rem}

\begin{figure}[ht]
  \centering
  \includegraphics[width=130pt]{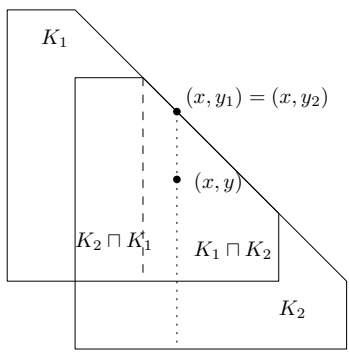}\\
  \caption{$(x,y_1)=(x,y_2)$}\label{y1equaly2}
\end{figure}

\begin{lem}\label{squareintersectlemma}
Suppose that $K_1, K_2$ and $K_3$ are any three distinct translative copies of $K$. We have
$$(K_1\sqcap K_2)\cap (K_2\sqcap K_3)\subseteq K_1\sqcap K_3.$$
\end{lem}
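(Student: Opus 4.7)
The plan is to take an arbitrary point $(x,y)\in (K_1\sqcap K_2)\cap (K_2\sqcap K_3)$ and check that it lies in $K_1\sqcap K_3$ by reducing everything to the vertical-coordinate criterion already packaged in Remarks \ref{toprovesquare} and \ref{toprovesquare2}. Since $(x,y)$ lies in $K_1\cap K_2$ and in $K_2\cap K_3$, it certainly lies in $K_1\cap K_3$, and the three quantities $y_i=\max\{z:(x,z)\in C_{K_i}\}$ are well defined because $x$ belongs to the horizontal extent of each $K_i$.

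Applying Remark \ref{toprovesquare} to $(x,y)\in K_1\sqcap K_2$ gives $y_1\le y_2$, and applying it to $(x,y)\in K_2\sqcap K_3$ gives $y_2\le y_3$. Hence $y_1\le y_3$.

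The argument then splits into two cases. If $y_1<y_3$, the converse direction of Remark \ref{toprovesquare} immediately yields $(x,y)\in K_1\sqcap K_3$. Otherwise $y_1=y_3$, which forces $y_1=y_2=y_3$. In that equal-height situation Remark \ref{toprovesquare2} turns membership in a cut $K_i\sqcap K_j$ into the inclusion of right half-plane slices: from $(x,y)\in K_1\sqcap K_2$ we get $K_1\cap R_x\subset K_2\cap R_x$, and from $(x,y)\in K_2\sqcap K_3$ we get $K_2\cap R_x\subset K_3\cap R_x$. Chaining the two inclusions gives $K_1\cap R_x\subset K_3\cap R_x$, and another invocation of Remark \ref{toprovesquare2} concludes that $(x,y)\in K_1\sqcap K_3$.

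The only genuine subtlety is the equal-height case: when two of the $C_{K_i}$ curves reach the same maximal height above $x$, Remark \ref{toprovesquare} alone is insufficient, and one must replace it with the half-plane inclusion characterization of Remark \ref{toprovesquare2}. The transitivity of set inclusion then does the work cleanly. No further geometric analysis of $f$ should be needed beyond what is encoded in those two remarks, so I expect the proof to be short once the two cases are handled separately.
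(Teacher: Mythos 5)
Your proof is correct and follows essentially the same route as the paper's: both derive $y_1\le y_2\le y_3$ from the first part of Remark \ref{toprovesquare}, handle the strict-inequality case via its converse part, and handle the equal-height case $y_1=y_2=y_3$ by chaining the half-plane inclusions of Remark \ref{toprovesquare2}. No gaps.
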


\begin{proof}
Suppose that $(x,y)\in (K_1\sqcap K_2)\cap (K_2\sqcap K_3)$ and $y_i=\max \{z : (x,z)\in C_{K_i}\}$, for $i=1,2,3$. From the first part of Remark \ref{toprovesquare}, we have $y_1\leq y_2\leq y_3$. If $y_1< y_2$ or $y_2<y_3$, then $y_1<y_3$, by the second part of Remark \ref{toprovesquare}, we know that $(x,y)\in K_1\sqcap K_3$.
Now we assume that $y_1=y_2=y_3$. From Remark \ref{toprovesquare2}, since $(x,y)\in K_1\sqcap K_2$ and $(x,y)\in K_2\sqcap K_3$, we know that $K_1\cap R_x\subset K_2\cap R_x$ and $K_2\cap R_x\subset K_3\cap R_x$. Hence $K_1\cap R_x\subset K_3\cap R_x$. Once again, by Remark \ref{toprovesquare2}, since $(x,y)\in K_1\cap K_3$, we have $(x,y)\in K_1\sqcap K_3$.
\end{proof}

Let $K'$ and $K''$ be any two distinct translative copies of $K$, we define \emph{cut operator} $\ominus$ by
$$K' \ominus K''= K'\setminus(K'\sqcap K'').$$
By definition, one can see that
$$(K' \ominus K'')\cap (K'' \ominus K')=\emptyset,$$
and
$$(K' \ominus K'')\cup (K'' \ominus K')=K'\cup K''.$$

\begin{figure}[ht]
  \centering
  \includegraphics[width=300pt]{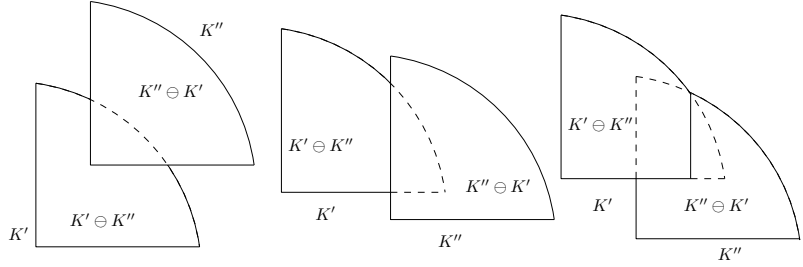}\\
  \caption{$K'\ominus K''$ and $K''\ominus K'$}\label{cutoperator}
\end{figure}

\begin{rem}\label{cutbyU}
 When $u=(x_0,y_0)\in \mathbb{R}^2$, we define
 $\tilde{L}_u=\{(x,y) : x<x_0, y\geq y_0\}$
 and
 $\tilde{R}_u=\{(x,y) : x\geq x_0, y\geq y_0\}$.
 By the definition of cut operator, one can show that there must be $u\in K'$ such that either $K'\ominus K''=K'\setminus \tilde{L}_u$
or
$K'\ominus K''=K'\setminus \tilde{R}_u$. (see Figure \ref{typeofcut})
\end{rem}

\begin{figure}[ht]
  \centering
  \includegraphics[width=150pt]{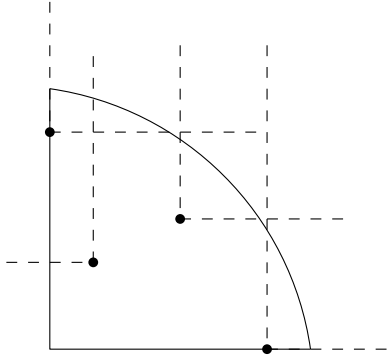}\\
  \caption{Types of Cut Operator}\label{typeofcut}
\end{figure}

\begin{lem}\label{cutoplemma}
Suppose that $K_1, K_2$ and $K_3$ are any three distinct translative copies of $K$. We have
$$(K_1\ominus K_2)\cap (K_2\ominus K_3)\subseteq K_1\ominus K_3.$$
\end{lem}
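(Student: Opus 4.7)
The plan is to unfold the definition of $\ominus$, take a point in the intersection on the left, and show it lies in $K_1 \ominus K_3$ by a short case analysis driven by Lemma \ref{squareintersectlemma}.

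Concretely, I would start by picking $p \in (K_1 \ominus K_2) \cap (K_2 \ominus K_3)$. Unpacking the definition gives $p \in K_1$, $p \in K_2$, $p \notin K_1 \sqcap K_2$, and $p \notin K_2 \sqcap K_3$. Since $p \in K_1 \cap K_2$ and the two pieces $K_1 \sqcap K_2$ and $K_2 \sqcap K_1$ partition $K_1 \cap K_2$, the first two facts force $p \in K_2 \sqcap K_1$. This is the piece of bookkeeping that turns the $\ominus$ statement into something about $\sqcap$, which is where Lemma \ref{squareintersectlemma} can be applied.

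Next I would split on whether $p$ lies in $K_3$. If $p \notin K_3$, then trivially $p \notin K_1 \sqcap K_3$ (because $K_1 \sqcap K_3 \subseteq K_1 \cap K_3$), so $p \in K_1 \setminus (K_1 \sqcap K_3) = K_1 \ominus K_3$ and we are done. If instead $p \in K_3$, then $p \in K_2 \cap K_3$; combined with $p \notin K_2 \sqcap K_3$ and the partition identity, we get $p \in K_3 \sqcap K_2$. Now I have $p \in (K_3 \sqcap K_2) \cap (K_2 \sqcap K_1)$, and Lemma \ref{squareintersectlemma} applied to the triple $(K_3, K_2, K_1)$ yields $p \in K_3 \sqcap K_1$. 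Since $K_1 \sqcap K_3$ and $K_3 \sqcap K_1$ are disjoint, this gives $p \notin K_1 \sqcap K_3$, so again $p \in K_1 \ominus K_3$.

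I do not expect a serious obstacle here: the lemma is essentially a formal consequence of the previous lemma together with the two basic identities $(K' \sqcap K'') \cap (K'' \sqcap K') = \emptyset$ and $(K' \sqcap K'') \cup (K'' \sqcap K') = K' \cap K''$. The only point that needs a little care is remembering to handle the case $p \notin K_3$ separately, because otherwise one is tempted to assume all three pairwise intersections are nonempty when invoking $\sqcap$; once that case is disposed of by the trivial containment $K_1 \sqcap K_3 \subseteq K_3$, the rest is a direct relabeling of Lemma \ref{squareintersectlemma}.
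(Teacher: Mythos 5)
Your proposal is correct and follows essentially the same argument as the paper: deduce $p\in K_2\sqcap K_1$ from the partition identity, split on whether $p\in K_3$, and in the nontrivial case apply Lemma \ref{squareintersectlemma} to the triple $(K_3,K_2,K_1)$. No differences worth noting.
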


\begin{proof}
Let $u\in(K_1\ominus K_2)\cap (K_2\ominus K_3)$. Since $u\in K_1\ominus K_2$ and $u\in K_2\ominus K_3$, we know that $u\in K_1$ and $u\in K_2$, respectively.
From $u\in K_1\ominus K_2$, we also know that $u\notin K_1\sqcap K_2$. This implies that $u\in K_2\sqcap K_1$.

For the case $u\notin K_3$, since $u\in K_1$, it is obvious that $u\in K_1\ominus K_3$. Now assume that $u\in K_3$. Because $u$ also lie in $K_2$ and $K_2\ominus K_3$, we know that $u\in K_3\sqcap K_2$. Therefore $u\in (K_3\sqcap K_2)\cap (K_2\sqcap K_1)$. By Lemma \ref{squareintersectlemma}, we have $u\in K_3\sqcap K_1$, and hence $u\in K_1\ominus K_3$.
\end{proof}

\subsection*{Stair Polygon}
For any real numbers $x,x',y,y'$, denote by $L(x,y,x',y')$ the line segment between $(x,y)$ and $(x',y')$.
Let $x^u_1<x^u_2<\cdots <x^u_{m+1}$ and $x^u_1=x^d_1<x^d_2<\cdots <x^d_{n+1}=x^u_{m+1}$.
Suppose that $y^u_1<\cdots <y^u_s, y^u_s>\cdots > y^u_m$ and $y^u_1>y^d_1>\cdots > y^d_t, y^d_t<\cdots < y^d_n<y^u_m$.
Let $S$ be a polygon with the sides
$$L(x^u_1,y^u_1,x^d_1,y^d_1), L(x^u_{m+1},y^u_m,x^d_{n+1},y^d_n),$$
$$ L(x^u_1,y^u_1,x^u_2,y^u_1),  L(x^u_2,y^u_1,x^u_2,y^u_2), L(x^u_2,y^u_2,x^u_3,y^u_2),\ldots, L(x^u_{m},y^u_{m},x^u_{m+1},y^u_m),$$
and
$$ L(x^d_1,y^d_1,x^d_2,y^d_1),  L(x^d_2,y^d_1,x^d_2,y^d_2), L(x^d_2,y^d_2,x^d_3,y^d_2),\ldots, L(x^d_{n},y^d_{n},x^d_{n+1},y^d_n).$$
If $S$ is a simple polygon, then we call $S$ a $(s-1,m-s,t-1,n-t)$ \emph{stair polygon}.

\begin{figure}[ht]
  \centering
  \includegraphics[width=240pt]{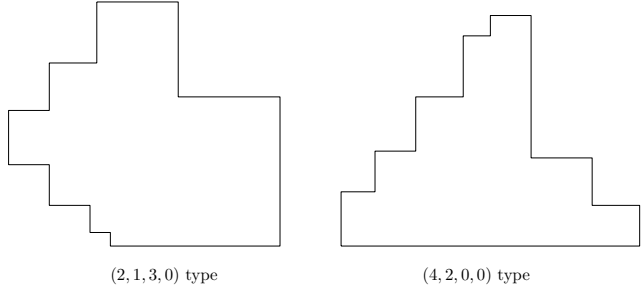}\\
  \caption{$(l^u,r^u,l^d,r^d)$ Stair Polygons}\label{stairpolygon}
\end{figure}

\begin{figure}[ht]
  \centering
  \includegraphics[width=180pt]{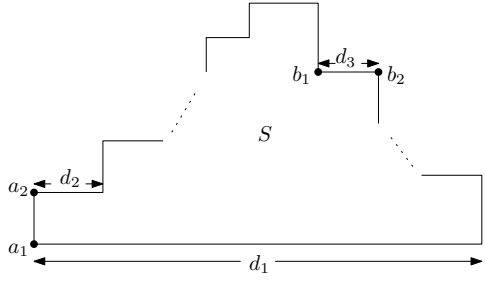}\\
  \caption{$a_1,a_2,b_1,b_2$ and $d_1,d_2,d_3$}\label{a1a2b1b2}
\end{figure}

\begin{lem}\label{stair not more than one}
Suppose that $S$ is a $(l,r,0,0)$ stair polygon. If $S$ can tile the plane (i.e., there exists a discrete set $X$ such that $S+X$ is a tiling of $\mathbb{R}^2$), then $l\leq 1$ and $r\leq 1$.
\end{lem}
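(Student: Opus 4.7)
My approach is by contradiction. Assume $S$ is a $(l,r,0,0)$ stair polygon admitting a translative tiling $S+X$ and, without loss of generality, that $l\geq 2$ (the case $r\geq 2$ is handled by the symmetric argument using the ``down'' side of the staircase and the bottom-left convex corner $(x^d_1,y^d_1)$).

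The starting observation is that the concave corners of $S$ on the ``up'' side of the staircase are exactly the points $P_i=(x^u_{i+1},y^u_i)$ for $i=1,\ldots,l$; near each $P_i$ the interior of $S$ occupies three of the four quadrants, leaving an exterior $\pi/2$-wedge that opens to the north-west. I plan to show that this wedge forces a uniquely determined other translate of $S$ to be placed with its bottom-right vertex $(x^d_{n+1},y^d_1)$ exactly at $P_i$. The argument is an angle count: the other tiles meeting at $P_i$ must contribute a total interior angle of $2\pi-3\pi/2=\pi/2$ at that point, and because every corner of a stair polygon is $\pi/2$ or $3\pi/2$ while an edge through $P_i$ would contribute $\pi$, the only way to sum to exactly $\pi/2$ is to have a single convex corner of another tile at $P_i$ whose interior wedge opens to the north-west. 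Inspecting all convex corners of $S$ one sees that only the bottom-right vertex has its interior quadrant pointing north-west, so the filling translate must be $T_i=S+v_i$ with $v_i=(x^u_{i+1}-x^u_{m+1},\;y^u_i-y^d_1)$. The main obstacle of the proof is in justifying this ``only configuration'' claim rigorously and ruling out exotic arrangements in which several tiles collaborate at $P_i$.

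It remains to show that for any $1\leq i<j\leq l$ the two forced translates $T_i$ and $T_j$ overlap in their interiors, which contradicts the tiling assumption. Since $v_{i,1}<v_{j,1}$ and $v_{i,2}<v_{j,2}$, I would consider a vertical line $x=x_0$ slightly to the left of $v_{i,1}+x^u_{s+1}$ so that $x_0$ lies in the interior of the peak column of $T_i$ and $x_0-v_{j,1}$ lies in the interior of a single column of $S$; on this line the slice of $T_i$ is the constant interval $[y^u_i,\,y^u_i+(y^u_s-y^d_1)]$ and the slice of $T_j$ is $[y^u_j,\,y^u_j+(y^u(x_0-v_{j,1})-y^d_1)]$. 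A short check using $i<j\leq l$ confirms that $x_0-v_{j,1}\in[x^u_1,x^u_{m+1}]$, so the latter slice is non-degenerate. The stair-polygon inequalities $y^u_l<y^u_s$ and $y^u_1>y^d_1$ then combine to give
\begin{equation*}
y^u_j-y^u_i \;\leq\; y^u_l-y^u_1 \;<\; y^u_s-y^d_1,
\end{equation*}
so $y^u_j$ lies strictly below the upper endpoint of the slice of $T_i$, and the two slices overlap in a non-degenerate interval. Since the local picture is constant for $x$ in a whole neighbourhood of $x_0$, this one-dimensional overlap promotes to a two-dimensional overlap of $T_i$ and $T_j$, producing the contradiction and concluding the proof.
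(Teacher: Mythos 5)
Your proof is correct and takes essentially the same approach as the paper's: both arguments rest on the observation that each reflex corner of the staircase leaves a quarter-plane gap that can only be filled by one specific convex corner of a single other translate (an angle count plus the fact that only one convex vertex of $S$ has the right orientation), which pins down that translate exactly. The paper then derives its contradiction from an impossible edge-length inequality $d_1\leq d_2+d_3$ between two successively forced tiles, whereas you derive it from the interior overlap of two independently forced tiles; this is a minor variation, and your version has the advantage of treating the cases $l\geq 2$ and $r\geq 2$ uniformly without relying on a figure.
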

\begin{proof}
If $l\cdot r=0$, then it is easy to show that $l\leq 1$ and $r\leq 1$. If $l\cdot r\neq 0$, then we may assume, without loss of generality, that $l\geq 1$ and $r>1$. The vertices $a_1,a_2,b_1,b_2$ of $S$ and the length $d_1,d_2,d_3$ of sides of $S$ are denoted as presented in Figure \ref{a1a2b1b2}.
Since $S+X$ is a tiling of $\mathbb{R}^2$, there exists a $v_1\in X$ such that $b_1=a_1+v_1$.
Hence, there must exit a $v_2\in X$ such that $b_2=a_2+v_2$.
This immediately implies that $d_1\leq d_2+d_3$ which is impossible. Therefore $l=0$ or $r\leq 1$. If $l=0$ or $r=0$, then it follows that $l\leq 1$ and $r\leq 1$. If $r=1$, then by the similar reason as above, we can obtain $l\leq 1$. This completes the proof.
\end{proof}

\begin{figure}[ht]
  \centering
  \includegraphics[width=160pt]{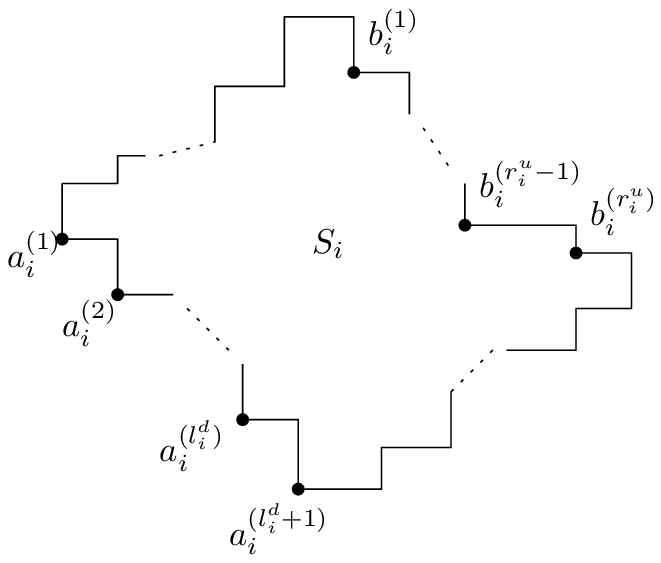}\\
  \caption{ $a^{(1)}_i, a^{(2)}_i,\ldots, a^{(l^d_i+1)}_i$ and $b^{(1)}_i, b^{(2)}_i,\ldots, b^{(r^u_i)}_i$}\label{verticesSi}
\end{figure}

\begin{lem}\label{average not more than}
 Let $S_i$ be a $(l^u_i,r^u_i,l^d_i,r^d_i)$ stair polygon, for $i=1,2,\ldots,N$. If $\{S_i\}$ is a tiling of $lI^2$ for some $l>0$, then
$$\sum_{i=1}^Nr^u_i\leq \sum_{i=1}^Nl^d_i+N-1.$$
\end{lem}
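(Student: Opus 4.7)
My plan is to prove the inequality by a vertex-counting injection. First I would classify the right-angle vertices of each stair polygon by whether the polygon's interior fills the $90^\circ$ (convex) or $270^\circ$ (concave) side at the vertex, and by the compass direction (NE, NW, SE, SW) of the $90^\circ$ wedge. A direct inspection of the eight vertex types of a stair polygon of parameters $(l^u_i,r^u_i,l^d_i,r^d_i)$ yields the count I need. Call a concave vertex \emph{NE-exterior} if its exterior $90^\circ$ wedge points northeast, equivalently if the two incident edges extend in the N and E directions from the vertex. These are exactly the right endpoints of the down-steps on the upper boundary, so $S_i$ contributes $r^u_i$ such vertices. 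Call a convex vertex \emph{NE-interior} if its interior $90^\circ$ wedge points northeast. These are the polygon's own bottom-left corner $(x^d_{1,i},y^d_{1,i})$ together with the right endpoint of each down-step on the lower boundary, so $S_i$ contributes $1+l^d_i$ such vertices.

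A short boundary check shows that every NE-exterior concave vertex in the tiling lies strictly inside $lI^2$, away from its boundary: such a vertex cannot lie on the top or right side of $lI^2$ (an N- or E-pointing edge of the tile would exit $lI^2$), and it cannot lie on the bottom or left side either (the tile's $270^\circ$ interior wedge, occupying the NW, SW and SE quadrants about the vertex, would protrude through the boundary).

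The main step is the injection. At a NE-exterior concave vertex $v$ of a tile $S_i$, the tile fills a $270^\circ$ wedge at $v$, leaving a $90^\circ$ exterior wedge in the NE direction. Because every other tile meeting $v$ contributes a single contiguous wedge of $90^\circ$, $180^\circ$ or $270^\circ$, the only way to fill exactly $90^\circ$ is with a single $90^\circ$ convex wedge whose interior points NE. Thus some other tile $S_k\neq S_i$ has a NE-interior convex vertex at $v$. Since two distinct $270^\circ$ wedges cannot meet at the same point without exceeding $360^\circ$, the NE-exterior concave vertices sit at distinct points, and the assignment $v\mapsto v$ is an injection from NE-exterior concave vertices into NE-interior convex vertices located at interior points of $lI^2$.

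To recover the $-1$, I observe that the tile covering the corner $(-l,-l)$ of $lI^2$ must contribute a $90^\circ$ NE-interior wedge at that corner; by the classification this wedge can only be the tile's own bottom-left corner, because a bottom down-step right-endpoint at $(-l,-l)$ would require the tile to extend to $x<-l$. This NE-interior convex vertex sits at a corner of $lI^2$, hence outside the image of the injection, so
$$\sum_{i=1}^N r^u_i \;\le\; \Big(N+\sum_{i=1}^N l^d_i\Big)-1.$$
The most delicate step is the angle-decomposition argument that forces a single $90^\circ$ convex wedge to fill the exterior NE region, in particular ruling out a T-junction or a subdivision of the exterior wedge among two or more neighboring tiles.
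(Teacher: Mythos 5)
Your proposal is correct and follows essentially the same route as the paper: your NE-exterior concave vertices and NE-interior convex vertices are precisely the paper's $b^{(j)}_i$ and $a^{(t)}_i$ from Figure \ref{verticesSi}, and the matching of each $b$-vertex to an $a$-vertex of another tile, with the corner $(-l,-l)$ accounting for the $-1$, is exactly the paper's argument. You merely supply the angle-decomposition and boundary details that the paper leaves to the reader.
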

\begin{proof}
The vertices $a^{(1)}_i, a^{(2)}_i,\ldots, a^{(l^d_i+1)}_i, b^{(1)}_i, b^{(2)}_i,\ldots, b^{(r^u_i)}_i$ of $S_i$ are defined as presented in Figure \ref{verticesSi}.
One can see that for all $i\in \{1,\ldots,N\}$ and $j\in\{1,\ldots,r^u_i\}$, there must exist $s\in\{1,\ldots,i-1,i+1,\ldots,N\}$ and $t\in\{1,\ldots,l^d_s+1\}$
such that $b^{(j)}_i=a^{(t)}_s$. Furthermore, $b^{(j)}_i\neq (-l,-l)$ for all $i,j$, whereas $a^{(t)}_s=(-l,-l)$ for some $s,t$. Hence $\sum_{i=1}^Nr^u_i\leq \sum_{i=1}^N(l^d_i+1)-1$.
\end{proof}

\subsection*{Maximum Area of Stair Polygon Contained in $K$}
Denote by $A(l^u,r^u,l^d,r^d)$ the supremum area of $(l^u,r^u,l^d,r^d)$ stair polygon contained in $K$. It is clear that $A(l^u,r^u,l^d,r^d)=A(0,r^u,0,0)$, so we may define $A(r)=A(0,r,0,0)$. Obviously, $A(r)$ is non-decreasing with respect to $r$. Furthermore, one can show that there must be a $(0,r',0,0)$ stair polygon $S$ with $r'\leq r$ such that $|S|=A(r)$.

Let $0<x_1<\cdots < x_{r+1}\leq1$. Denote by $S(x_1,\ldots,x_{r+1})$ the stair polygon with vertices
$$(0,0), (0,f(x_1)), (x_{r+1},0),$$
and
$$(x_1,f(x_1)), (x_1,f(x_2)), (x_2,f(x_2)), \ldots, (x_r,f(x_{r+1})), (x_{r+1},f(x_{r+1})).$$
Note that $S(x_1,\ldots,x_{r+1})$ is a $(0,r',0,0)$ stair polygon, for some $r'\leq r$. One can prove that for all $r$, there must exist $0<x_1<\cdots < x_{r+1}\leq1$ such that $|S(x_1,\ldots,x_{r+1})|=A(r)$.

\begin{figure}[ht]
  \centering
  \includegraphics[width=200pt]{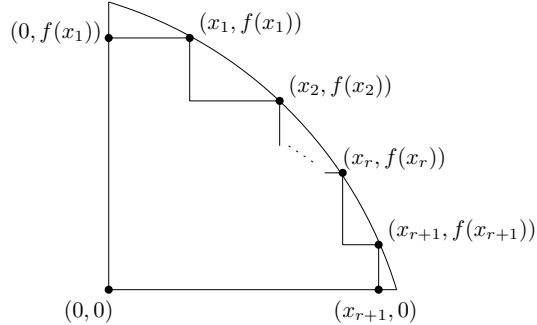}\\
  \caption{ $S(x_1,\ldots,x_{r+1})$}\label{Sx1xr}
\end{figure}

\begin{lem}\label{to prove convex}
Suppose that $0<x_1<\cdots < x_{r}\leq1$ and $0<x'_1<\cdots < x'_{r'}\leq1$, where $r+r'$ is an even number.
Let $(z_1,\ldots,z_{r+r'})$ is the rearrangement of $(x_1,\ldots,x_{r},x'_1,\ldots,x'_{r'})$, where $z_1\leq \cdots \leq z_{r+r'}$, then
$$|S(x_1,\ldots,x_{r})|+|S(x'_1,\ldots,x'_{r'})|\leq |S(z_1,z_3\ldots,z_{r+r'-1})|+|S(z_2,z_4,\ldots,z_{r+r'})|$$
\end{lem}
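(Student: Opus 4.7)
The plan is to rewrite both sides of the inequality as weighted linear combinations of the values $z_k$, using the same non-negative weights extracted from $f$, and then to compare coefficient by coefficient.

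Put $z_0:=0$, and for $1\le k<r+r'$ set $w_k:=f(z_k)-f(z_{k+1})$, together with $w_{r+r'}:=f(z_{r+r'})$. Since $f$ is non-increasing each $w_k\ge 0$, and telescoping gives $f(z_j)=\sum_{k\ge j}w_k$. For any sorted sublist $Y\subseteq\{z_1,\ldots,z_{r+r'}\}$, define
$$M_Y(k):=\max\{z\in Y:z\le z_k\},$$
with the convention $M_Y(k):=0$ when this set is empty. Substituting $f(z_j)=\sum_{k\ge j}w_k$ into $|S(Y)|=\sum_{z_j\in Y}(z_j-\mathrm{prev}_Y(z_j))f(z_j)$ and exchanging the order of summation, the inner sum $\sum_{z_j\in Y,\,j\le k}(z_j-\mathrm{prev}_Y(z_j))$ telescopes to the largest element of $Y$ with sorted index at most $k$, i.e. to $M_Y(k)$, producing the key identity
$$|S(Y)|=\sum_{k=1}^{r+r'}w_k\,M_Y(k).$$

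Let $A_0:=(z_1,z_3,\ldots,z_{r+r'-1})$ and $B_0:=(z_2,z_4,\ldots,z_{r+r'})$. A direct inspection gives $M_{A_0}(k)+M_{B_0}(k)=z_k+z_{k-1}$ for every $k$, so after applying the key identity to $Y\in\{X,X',A_0,B_0\}$ the lemma reduces to the pointwise inequality
$$M_X(k)+M_{X'}(k)\le z_k+z_{k-1}\quad\text{whenever }w_k>0.$$
Fix such a $k$; since $w_k>0$ we have $z_k<z_{k+1}$ (trivially if $k=r+r'$), so no copy of the value $z_k$ sits at any sorted index greater than $k$. Without loss of generality $z_k\in X$, hence $M_X(k)=z_k$; any element of $X'$ of value $\le z_k$ either has value strictly less than $z_k$ (and then at most $z_{k-1}$), or is a duplicate copy of $z_k$ sitting at sorted index $k-1$ (which forces $z_{k-1}=z_k$). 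In either case $M_{X'}(k)\le z_{k-1}$, as required. Multiplying by $w_k\ge 0$ and summing over $k$ finishes the proof.

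I do not foresee any substantive obstacle: the only delicate point is the case in which the lists $(x_i)$ and $(x'_j)$ share a common value, creating a duplicate $z_k=z_{k+1}$ in the sorted multiset, but at such an index $w_k=0$ and nothing is contributed on either side. Note that the argument uses only the monotonicity of $f$, not its convexity, so the lemma is really a rearrangement-type inequality; convexity of $f$ presumably enters only when this lemma is subsequently invoked to establish a convexity property of $A(r)$.
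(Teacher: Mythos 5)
Your proof is correct, and it takes a formally different route from the paper's, although the two arguments encode the same combinatorial fact. The paper observes that $S(x_1,\ldots,x_{r})\cup S(x'_1,\ldots,x'_{r'})=S(z_1,z_3,\ldots,z_{r+r'-1})\cup S(z_2,z_4,\ldots,z_{r+r'})$, proves the containment $S(x_1,\ldots,x_{r})\cap S(x'_1,\ldots,x'_{r'})\subseteq S(z_1,z_3,\ldots,z_{r+r'-1})\cap S(z_2,z_4,\ldots,z_{r+r'})$ by a pointwise argument, and concludes by inclusion--exclusion. Your identity $|S(Y)|=\sum_k w_k M_Y(k)$ is exactly the horizontal (Cavalieri) slicing of these polygons into the bands $f(z_{k+1})<y\leq f(z_k)$: on each band the cross-section of $S(Y)$ is the interval $[0,M_Y(k)]$, both sides of the lemma have the same maximal width $z_k$ on that band, and your inequality $M_X(k)+M_{X'}(k)\leq z_k+z_{k-1}$ is the sliced form of the paper's intersection containment. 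What your version buys is a fully mechanical verification: the degenerate case of a value common to both lists is disposed of by the observation that $w_k=0$ there, whereas the paper leaves that case implicit, and you make visible that only the monotonicity of $f$ (not its convexity) is used. The paper's version is shorter and avoids introducing the weights. One small imprecision on your side: the claimed identity $M_{A_0}(k)+M_{B_0}(k)=z_k+z_{k-1}$ can fail at an index with $z_k=z_{k+1}$ (both maxima may then equal $z_k$), but, as you in effect argue, such indices carry weight $w_k=0$ and contribute nothing, so the argument stands.
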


\begin{proof}
Obviously, we have
$$S(x_1,\ldots,x_{r})\cup S(x'_1,\ldots,x'_{r'})=S(z_1,z_3\ldots,z_{r+r'-1})\cup S(z_2,z_4,\ldots,z_{r+r'}).$$
By the inclusion-exclusion principle, it suffices to show that
$$S(x_1,\ldots,x_{r})\cap S(x'_1,\ldots,x'_{r'})\subseteq S(z_1,z_3\ldots,z_{r+r'-1})\cap S(z_2,z_4,\ldots,z_{r+r'}),$$
i.e.,
$$S(x_1,\ldots,x_{r})\cap S(x'_1,\ldots,x'_{r'})\subseteq S(z_1,z_3\ldots,z_{r+r'-1})$$
and
$$S(x_1,\ldots,x_{r})\cap S(x'_1,\ldots,x'_{r'})\subseteq S(z_2,z_4,\ldots,z_{r+r'}).$$
Let $(x,y)\in S(x_1,\ldots,x_{r})\cap S(x'_1,\ldots,x'_{r'})$. One can see that we have $x\leq z_{r+r'-1}\leq z_{r+r'}$ and $y\leq f(z_2)\leq f(z_1)$.
To complete the proof, we only need to show that $x\leq z_i$ or $y\leq f(z_{i+2})$, for all $i=1,2,\ldots,r+r'-2$.
In fact, if $x>z_i$ and $y>f(z_{i+2})$, then there must exist $z_i<z<z_{i+2}$ and $z_i<z'<z_{i+2}$ such that
$(z,f(z))$ is a vertex of $S(x_1,\ldots,x_{r})$ and $(z',f(z'))$ is a vertex of $S(x'_1,\ldots,x'_{r'})$. By the definition of $z_i$, one can see that this is impossible.
\end{proof}

\begin{lem}
$A(r)$ is concave, i.e. $A(r)+A(r')\leq 2A\left(\frac{r+r'}{2}\right)$, where $r+r'$ is even.
\end{lem}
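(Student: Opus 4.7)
The plan is to deduce this concavity statement directly from Lemma \ref{to prove convex}, by applying it to the stair polygons that realize $A(r)$ and $A(r')$.

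First I would invoke the existence result stated just before Lemma \ref{to prove convex}: there exist points $0<x_1<\cdots<x_{r+1}\leq 1$ with $|S(x_1,\ldots,x_{r+1})|=A(r)$, and similarly $0<x'_1<\cdots<x'_{r'+1}\leq 1$ with $|S(x'_1,\ldots,x'_{r'+1})|=A(r')$. Since $r+r'$ is even, the total number of arguments $(r+1)+(r'+1)=r+r'+2$ is also even, so Lemma \ref{to prove convex} applies. Let $z_1\leq z_2\leq\cdots\leq z_{r+r'+2}$ be the sorted rearrangement of the combined list of $x_i$'s and $x'_j$'s.

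Next I would apply Lemma \ref{to prove convex} to obtain
$$A(r)+A(r')\leq |S(z_1,z_3,\ldots,z_{r+r'+1})|+|S(z_2,z_4,\ldots,z_{r+r'+2})|.$$
Each of the two stair polygons on the right is built from exactly $\tfrac{r+r'}{2}+1$ arguments, so by the remark that $S(y_1,\ldots,y_{k+1})$ is a $(0,k'',0,0)$ stair polygon with $k''\leq k$, each of them has area at most $A\!\left(\tfrac{r+r'}{2}\right)$. Adding these two bounds yields the desired inequality $A(r)+A(r')\leq 2A\!\left(\tfrac{r+r'}{2}\right)$.

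I do not anticipate any genuine obstacle: the combinatorial/geometric content has already been packaged into Lemma \ref{to prove convex}, and the only bookkeeping to watch is the parity of the argument count and the fact that $S$ with $m+1$ arguments realizes at most $A(m)$. The one small point to state cleanly is that the sup defining $A(r)$ is attained by some $S(x_1,\ldots,x_{r+1})$, which is exactly what the preceding paragraph of the paper asserts, so the entire proof reduces to two or three lines.
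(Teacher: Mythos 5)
Your proposal is correct and is exactly the argument the paper intends: the paper's proof consists solely of the sentence that the lemma ``immediately follows from Lemma \ref{to prove convex},'' and your write-up supplies precisely the missing bookkeeping (attainment of $A(r)$ by some $S(x_1,\ldots,x_{r+1})$, the parity count $(r+1)+(r'+1)=r+r'+2$, and the bound $|S(y_1,\ldots,y_{k+1})|\leq A(k)$). No substantive difference from the paper's route.
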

\begin{proof}
This immediately follows from Lemma \ref{to prove convex}.
\end{proof}

Note that we can define a non-decreasing concave function $B$ on $[0,+\infty)$ such that $B(r)=A(r)$ , for all $r=0,1,2,\ldots$. For convenience, we also denote the function $B$ by $A$.

\subsection*{Lattice Covering Density of $K$}
Let $X$ be a discrete subset of $\mathbb{R}^2$. Suppose that $K+X=\{K_i\}$ is a covering of $\mathbb{R}^2$.
We can assume, without loss of generality, that $K_i\neq K_j$ for all $i\neq j$ , and for every $l>0$ there are finite numbers of $i$ such that $K_i\cap lI^2\neq \emptyset$.
For any fixed $i$, let
$$T_i=\bigcup_{j\neq i} (K_i\sqcap K_j),$$
and
$$S'_i=K_i\setminus T_i.$$
We note that $S'_i=\bigcap_{j : j\neq i} K_i\ominus K_j $. Let $S_i$ be the closure of $S'_i$. Obviously, $S_i\subseteq K_i$.

\begin{lem}\label{stairtileplane}
$\{S_i\}$ is a tiling of $\mathbb{R}^2$, and $S_i$ is a $(l_i,r_i,0,0)$ stair polygon, for each $i$.
\end{lem}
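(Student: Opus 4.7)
The plan is to verify the three conclusions in sequence: $\{S_i\}$ covers $\mathbb{R}^2$, $Int(S_i) \cap Int(S_j) = \emptyset$ for $i \neq j$, and each $S_i$ is a $(l_i, r_i, 0, 0)$ stair polygon. Local finiteness of $\{K_i\}$ ensures each $T_i$ is a finite union of closed sets, hence closed, so $S'_i = K_i \setminus T_i$ is relatively open in $K_i$. This is what makes the subsequent arguments go through cleanly.

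For covering I would fix $p \in \mathbb{R}^2$ and consider $I_p = \{i : p \in K_i\}$, which is finite and non-empty. Define a relation $i \prec j$ on $I_p$ by $p \in K_j \sqcap K_i$: the paired identities $(K_i \sqcap K_j) \cap (K_j \sqcap K_i) = \emptyset$ and $(K_i \sqcap K_j) \cup (K_j \sqcap K_i) = K_i \cap K_j$ yield antisymmetry and totality, while Lemma \ref{squareintersectlemma} yields transitivity. Let $i^*$ be the $\prec$-minimum. For every $j \neq i^*$, either $p \notin K_j$ (so $p \notin K_{i^*} \sqcap K_j \subseteq K_j$) or $i^* \prec j$, which forces $p \in K_j \sqcap K_{i^*}$ and hence $p \notin K_{i^*} \sqcap K_j$; either way $p \in K_{i^*} \ominus K_j$. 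Intersecting over $j$ places $p$ in $S'_{i^*} \subseteq S_{i^*}$.

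For the packing, the disjointness $(K_i \sqcap K_j) \cap (K_j \sqcap K_i) = \emptyset$ immediately gives $S'_i \cap S'_j = \emptyset$ for $i \neq j$. To promote this to disjoint interiors, set $\tilde{S}_i = S'_i \cap Int(K_i)$, which is open, and check using $T_i$ closed and $K_i = \overline{Int(K_i)}$ that $\overline{\tilde{S}_i} = S_i$. The standard topological fact that disjoint open sets $A, B$ satisfy $Int(\overline{A}) \cap Int(\overline{B}) = \emptyset$ (an open subset of a closure must meet the set itself) then gives $Int(S_i) \cap Int(S_j) = \emptyset$.

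The stair polygon structure is the deepest part. By Remark \ref{cutbyU}, each of the finitely many nontrivial cuts $K_i \ominus K_j$ is either $K_i \setminus \tilde{L}_{u_j}$ or $K_i \setminus \tilde{R}_{u_j}$ with $u_j = (x_j, y_j) \in K_i$; sort these into $L_i$ (left-type) and $R_i$ (right-type). In the local coordinates of $K_i$, the upper boundary of $S_i$ at abscissa $x$ is
\[h(x) = \min\bigl(f(x),\ \ell(x),\ r(x)\bigr), \qquad \ell(x) = \min_{u \in L_i,\, x_u > x} y_u, \qquad r(x) = \min_{u \in R_i,\, x_u \leq x} y_u,\]
with $\ell$ non-decreasing and $r$ non-increasing as step functions, so $\min(\ell, r)$ is a unimodal step function. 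Combined with the left vertical segment inside $S^v_{K_i}$, the horizontal bottom inside $S^h_{K_i}$, and the terminal vertical drop on the right, this traces out exactly a $(l_i, r_i, 0, 0)$ stair polygon. The hard part is showing $f$ is never binding in the minimum, so that $C_{K_i}$ is entirely cut away. For this I would, for any $x \in (0, 1)$, pick $(x, f(x) + 1/n) \notin K_i$; covering and local finiteness produce a single $K_j$ containing infinitely many such points, hence by closure $(x, f(x)) \in K_j$, and since $K_j$ contains points strictly above, $(x, f(x)) \in K_j \setminus C_{K_j}$. Remark \ref{conditioninsquare} then places $(x, f(x))$ into $K_i \sqcap K_j = \tilde{L}_{u_j} \cap K_i$ or $\tilde{R}_{u_j} \cap K_i$, and the defining quadrant forces $y_{u_j} \leq f(x)$ with $u_j$ positioned so that $\ell(x) \leq f(x)$ or $r(x) \leq f(x)$ respectively.
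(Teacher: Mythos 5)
Your covering argument (the total order $\prec$ on $I_p$ with minimum $i^*$) is correct and is essentially the paper's argument in different clothing: the paper maximizes $card\{D_i\}$ where $D_i=\{j : v\in K_i\ominus K_j\}$ and uses Lemma \ref{cutoplemma} to show the maximizer has $D_{i_0}$ of full size, which locates the same extremal element. Your interior-disjointness discussion is reasonable in outline (though note that $K_i\sqcap K_j$ is \emph{not} always closed --- the half-plane $L_{x_0}$ in its definition is open --- so ``each $T_i$ is a finite union of closed sets, hence closed'' is not literally true as stated).

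The genuine gap is in the decisive step, showing that $C_{K_i}$ is entirely cut away, i.e.\ $C_{K_i}\subseteq\bigcup_j (K_i\sqcap K_j)$. Your vertical-ray argument produces a $K_j$ containing $(x,f(x))$ together with points directly above it, and you conclude $(x,f(x))\in K_j\setminus C_{K_j}$ so that Remark \ref{conditioninsquare} applies. But when $f(1)>0$ the curve $C_{K_j}$ contains the right vertical edge of $K_j$, and a point on that edge does have points of $K_j$ strictly above it. Concretely, take $f\equiv 1$ (the unit square), $K_i=[0,1]^2$, $p=(1/2,1)\in C_{K_i}$, and $K_j=[-1/2,1/2]\times[1/2,3/2]$: then $K_j$ contains every $(1/2,\,1+1/n)$ with $n\geq 2$, yet $p$ lies on the right edge of $K_j$, so $p\in C_{K_j}$, Remark \ref{conditioninsquare} gives nothing, and in fact one computes $x_0=1/2$ and $p\in K_j\sqcap K_i$ --- this witness does \emph{not} cut $p$ out of $K_i$. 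Your argument also never treats the right vertical portion of $C_{K_i}$ itself, which your parametrization by $x\in(0,1)$ misses. The paper avoids both problems by arguing by contradiction with a \emph{diagonal} ray: if $u\in C_{K_i}$ were cut away by no neighbor, then by the contrapositive of Remark \ref{conditioninsquare} every neighbor either misses $u$ or has $u$ on its own curve $C$, so a short segment $u+(t\epsilon,t\epsilon)$ avoids all neighbors of $K_i$; it must then be covered by the finitely many non-neighbors, which are closed and disjoint from $K_i$ and hence cannot reach points arbitrarily close to $u\in K_i$. Replacing your vertical sequence by this diagonal one, and running the argument by contradiction over all neighbors simultaneously rather than hunting for a single cutting witness along the ray, repairs the proof.
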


\begin{proof}
We will first prove that $S_i$ is a stair polygon. By our hypothesis (i.e.,for every $l>0$ there are finite numbers of $i$ such that $K_i\cap lI^2\neq \emptyset$), we can deduce that for every $K_i$, there exist finite numbers of $K_j$ such that $K_i\cap K_j\neq \emptyset$. Assume that
$$\{K_j : K_j\cap K_i\neq \emptyset, j\neq i\}=\{K_{i_1},\ldots,K_{i_N}\}.$$
By the definition, we have
$$S'_i=K_i\setminus \bigcup_{k=1}^{N} (K_i\sqcap K_{i_k})$$
We assert that
$$C_{K_i}\subseteq \bigcup_{k=1}^{N}(K_i\sqcap K_{i_k}).$$
In fact, if there is a $u\in C_{K_i}\setminus \bigcup_{k=1}^{N} (K_i\sqcap K_{i_k})$, then from Remark \ref{conditioninsquare}, we know that for each $i_k$, either $u\in C_{K_{i_k}}$ or $u\notin K_{i_k}$.
This implies that there exists an efficiently small number $\epsilon>0$ such that for all $0<t<1$, $u+\left(t\epsilon,t\epsilon\right)\notin \bigcup_{k=1}^{N}K_{i_k}$.
On the other hand, since $\{K_i\}$ is a covering of $\mathbb{R}^2$, we know that
$$\{u+\left(t\epsilon,t\epsilon\right) : 0<t<1\}\subset\bigcup_{j : K_j\cap K_i=\emptyset}K_j.$$
However, we cannot use finite numbers of $K_j$ to cover the line segment $\{u+\left(t\epsilon,t\epsilon\right) : 0<t<1\}$ without intersect with $K_i$. This violates our hypothesis. Therefore $C_{K_i}\subseteq \bigcup_{k=1}^{N}(K_i\sqcap K_{i_k})$, i.e., $C_{K_i}\cap S'_i=\emptyset$. By combining this with Remark \ref{cutbyU}, we obtain that $S_i$ must be a $(l_i,r_i,0,0)$ stair polygon. Here, we note that $S_i$ cannot be an empty set.

Now let $v\in \mathbb{R}^2$. By the hypothesis, there exist only finite numbers of $K_i$ such that $v\in K_i$. We may assume, without loss of generality, that $\{K_1,\ldots,K_N\}$ are all of $K_i$ that contain $v$. For $i=1,\ldots,N$, let
$$D_i=\{j : v\in K_i\ominus K_j, j\neq i\}.$$
Let $i_0$ be the index such that $d_{i_0}=card\{D_{i_0}\}$ is maximum. If $d_{i_0}<N-1$, then assume that
$$D_{i_0}=\{j_1,\ldots,j_{d_{i_0}}\}.$$
Choose $j\in\{1,\ldots,i_0-1,i_0+1,\ldots,N\}\setminus D_{i_0}$. Since $v\notin K_{i_0}\ominus K_j$, we know that $v\in K_j\ominus K_{i_0}$. On the other hand, $v\in K_{i_0}\ominus K_{j_k}$, for all $k=1,\ldots,d_{i_0}$. By Lemma \ref{cutoplemma}, we have $v\in K_j\ominus K_{j_k}$, for $k=1,\ldots,d_{i_0}$. This implies that $D_{i_0}\cup\{i_0\}\subseteq D_j$, i.e., $card\{D_{i_0}\}<card\{D_j\}$. This is a contradiction. Hence $d_{i_0}=N-1$. We note that for $j$ that $v\notin K_j$, we have $v\in K_{i_0}\ominus K_j$. It follows that $v\in \bigcap_{j : j\neq i_0} K_{i_0}\ominus K_j=S'_{i_0}\subset S_{i_0}$.
\end{proof}

For the lattice covering case, i.e. $X=\Lambda$ is a lattice, $S_i$ and $S_j$ can coincide with each other by translation, for every $i,j$. Thus we may assume that $S_i$ is a translative copy of $S$, for all $i$, where $S$ is a $(l,r,0,0)$ stair polygon. This implies that $S$ can tile the plane. By Lemma \ref{stair not more than one}, we know that $l\leq 1$ and $r\leq 1$. Hence $|S|\leq A(1)$. Furthermore, one can easily show that the density of $K+\Lambda$ is $\frac{|K|}{|S|}$. This immediately implies that $\vartheta_L(K)\geq \frac{|K|}{A(1)}$. On the other hand, there is a $(0,r,0,0)$ stair polygon $S$ contained in $K$ with $r\leq 1$ such that $|S|=A(1)$. It is not hard to see that $S$ can tile the plane. Assume that $S+\Lambda'$ is a tiling of $\mathbb{R}^2$. Obviously, $K+\Lambda'$ is a lattice covering of $\mathbb{R}^2$ with density $\frac{|K|}{|S|}=\frac{|K|}{A(1)}$. We have thus proved
\begin{lem}\label{latticecoveringdensity}
$\vartheta_L(K)=\frac{|K|}{A(1)}$.
\end{lem}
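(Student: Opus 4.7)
The plan is to prove the two inequalities $\vartheta_L(K) \ge \frac{|K|}{A(1)}$ and $\vartheta_L(K) \le \frac{|K|}{A(1)}$ separately, formalizing the sketch already given in the paragraph preceding the lemma. The two crucial inputs are Lemma \ref{stairtileplane}, which attaches to every translative covering a tiling by $(l,r,0,0)$ stair polygons $S_i\subseteq K_i$, and Lemma \ref{stair not more than one}, which constrains the shape parameters $(l,r)$ of a stair polygon that tiles the plane on its own.

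For the lower bound, I would take an arbitrary lattice covering $K+\Lambda$ and form the associated tiling $\{S_i\}$ via Lemma \ref{stairtileplane}. The first and most substantive step is to verify that all $S_i$ are $\Lambda$-translates of a single stair polygon $S$: because the cut operator $\ominus$ is defined purely in terms of the translates $K_i$, it is equivariant under the $\Lambda$-action, so $S_{i+\lambda}=S_i+\lambda$ for every $\lambda\in\Lambda$, and since $\Lambda$ acts transitively on $\{K_i\}$ the polygons $S_i$ form a single $\Lambda$-orbit. In particular, $S$ itself tiles $\mathbb{R}^2$ by $\Lambda$-translates, so by Lemma \ref{stair not more than one} its shape parameters satisfy $l\le 1$, $r\le 1$, whence $|S|\le A(r)\le A(1)$. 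Since $S+\Lambda$ is a tiling, the covolume of $\Lambda$ equals $|S|$, and the density of $K+\Lambda$ is therefore $\frac{|K|}{|S|}\ge \frac{|K|}{A(1)}$.

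For the upper bound, I would invoke the existence result from the discussion of $A(r)$: there is a $(0,r,0,0)$ stair polygon $S\subseteq K$ with $r\le 1$ and $|S|=A(1)$. Such an $S$ is either an axis-aligned rectangle (when $r=0$) or an $L$-shape (when $r=1$), both of which admit an explicit lattice tiling whose basis vectors can be read off directly from the coordinates of $S$. Pick such a lattice $\Lambda'$ with $S+\Lambda'=\mathbb{R}^2$; then $K+\Lambda'\supseteq S+\Lambda'=\mathbb{R}^2$ is a lattice covering, and its density equals $\frac{|K|}{|S|}=\frac{|K|}{A(1)}$, which combined with the lower bound gives the lemma.

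The main obstacle is really just the bookkeeping in the lower bound: verifying the $\Lambda$-equivariance of the construction $K_i\mapsto S_i$ rigorously, so that all $S_i$ share a common shape whose area equals the covolume of $\Lambda$. Once this is secured, Lemma \ref{stair not more than one} pins down $(l,r)$, the definition of $A$ gives the area bound, and the explicit tiling by the extremal $S$ supplies the matching construction.
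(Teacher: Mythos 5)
Your proposal is correct and follows essentially the same route as the paper: the paper likewise derives the lower bound by observing that in the lattice case all $S_i$ from Lemma \ref{stairtileplane} are translates of a single stair polygon $S$ that tiles the plane, applies Lemma \ref{stair not more than one} to get $|S|\leq A(1)$, and obtains the upper bound from the extremal $(0,r,0,0)$ stair polygon of area $A(1)$ which tiles by a lattice. Your additional remarks on $\Lambda$-equivariance and the rectangle/L-shape cases merely flesh out steps the paper leaves as ``one can easily show.''
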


\section{Proof of Main Theorem}
By (\ref{translation and lattice densities}) and (\ref{global_to_local}), one can see that if Theorem \ref{main theorem} is not true, then there exists a sufficiently large number $l$ such that
$$\frac{m(K,l)|K|}{|lI^2|}<\vartheta_L(K).$$
Thus it suffices to show that if $lI^2\subseteq K+X$ for some $l>0$ and finite subset $X$ of $\mathbb{R}^2$, then
$$\frac{card\{X\}|K|}{|lI^2|}\geq \vartheta_L(K),$$
i.e.(by Lemma \ref{latticecoveringdensity}),
\begin{equation}
card\{X\}A(1)\geq |lI^2|.
\end{equation}
To prove this, assume that $card\{X\}=N$ and $K+X=\{K_1,\ldots,K_N\}$, we define
$$S'_i=lI^2\cap\bigcap_{j:j\neq i}(K_i\ominus K_j).$$
Let $S_i=\overline{S'_i}$. Obviously $S_i\subseteq K_i$. Without loss of generality, we may assume that $S_i\neq\emptyset$ for all $i$. Similar to Lemma \ref{stairtileplane}, one can show that $\{S_i\}$ is a tiling of $lI^2$ and $S_i$ is a $(l_i,r_i,0,0)$ stair polygon, for each $i$.
By Lemma \ref{average not more than}, we know that
$$\sum_{i=1}^Nr_i\leq N-1.$$
By the properties of function $A$, we have
$$|lI^2|=\sum_{i=1}^{N}|S_i|\leq \sum_{i=1}^{N}A(r_i)\leq N\cdot A\left(\frac{\sum_{i=1}^Nr_i}{N}\right)\leq N\cdot A\left(\frac{N-1}{N}\right)\leq N\cdot A(1).$$
This completes the proof.

\section{Applications}
It is well known that $\vartheta_T$ and $\vartheta_L$ all are affinely invariant.
\begin{thm}
For any triangle $K$, $\vartheta_T(K)=\vartheta_L(K)$.
\end{thm}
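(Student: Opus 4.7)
The plan is to reduce this to a direct application of Theorem \ref{main theorem}, the main result of the paper, which asserts $\vartheta_T(K)=\vartheta_L(K)$ for every quarter-convex disk. So the entire task is to verify that every triangle is affinely equivalent to some $K_f$ in the class defined in the introduction.

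First, I would recall two ingredients already in place: (i) the class of quarter-convex disks is defined up to affine equivalence, i.e., $K$ is quarter-convex iff some affine image of $K$ equals $K_f$ for a convex, non-increasing, continuous $f$ with $f(0)=1$ and $f(1)\geq 0$; and (ii) the densities $\vartheta_T$ and $\vartheta_L$ are both affine invariants, as noted at the beginning of the Applications section. So it is enough to exhibit, for each triangle $T$, one affine image $AT$ that is of the form $K_f$.

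Next I would produce the affine map explicitly. Given an arbitrary triangle $T$, choose an affine bijection of $\mathbb{R}^2$ sending its three vertices to $(0,0)$, $(1,0)$ and $(0,1)$. The image is then the standard right triangle $\{(x,y):0\leq x\leq 1,\ 0\leq y\leq 1-x\}$, which is exactly $K_f$ for $f(x)=1-x$. This $f$ is continuous, is non-increasing (strictly), is convex (it is affine, hence both convex and concave), and satisfies $f(0)=1$ and $f(1)=0\geq 0$. Thus $T$ is quarter-convex in the sense of the paper.

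Finally, applying Theorem \ref{main theorem} to this affine image and using the affine invariance of $\vartheta_T$ and $\vartheta_L$ yields $\vartheta_T(T)=\vartheta_L(T)$. There is essentially no obstacle here; the only thing to be careful about is the convention that $f$ be non-increasing and convex, which is why we place the right angle of the normalized triangle at the origin with legs along the axes. An entirely analogous remark would let one also deduce $\vartheta_T(Q)=\vartheta_L(Q)$ for any convex quadrilateral $Q$, by sending three of its vertices to $(0,0),(1,0),(0,1)$ and checking that the fourth lies appropriately so that the upper boundary is the graph of a convex non-increasing $f$ with $f(0)=1$, $f(1)\geq 0$.
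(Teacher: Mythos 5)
Your proof is correct and is essentially identical to the paper's: both map the triangle affinely onto the standard right triangle with vertices $(0,0)$, $(1,0)$, $(0,1)$, recognize it as $K_f$ with $f(x)=1-x$, and invoke Theorem \ref{main theorem} together with the affine invariance of $\vartheta_T$ and $\vartheta_L$. Your write-up merely makes explicit the verification of the conditions on $f$ that the paper leaves implicit.
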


\begin{proof}
There is an affine transformation from the triangle $K$ to the triangle of vertices $(0,0)$, $(1,0)$ and $(0,1)$
\end{proof}

\begin{thm}
For any quadrilateral $K$, $\vartheta_T(K)=\vartheta_L(K)$.
\end{thm}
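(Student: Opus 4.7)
I plan to invoke Theorem~\ref{main theorem}, so it suffices to show that every convex quadrilateral $K$ is quarter-convex, that is, affinely equivalent to some $K_f$.

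Label the vertices of $K$ in cyclic order as $V_1, V_2, V_3, V_4$ and let $T$ be the unique affine map sending $V_1 \mapsto (0, 0)$, $V_2 \mapsto (1, 0)$, $V_4 \mapsto (0, 1)$; write $(p, q) = T(V_3)$. Convexity of $K$ (checked by computing the four edge cross-products) forces $p > 0$, $q > 0$, and $p + q > 1$. In the ``good'' case $(p, q) \in (0, 1] \times (0, 1]$, the image $T(K)$ is the quadrilateral with vertices $(0, 0), (1, 0), (p, q), (0, 1)$, which coincides with $K_f$ for the piecewise linear $f$ satisfying $f(0) = 1$, $f(p) = q$, $f(1) = 0$; the inequality $p + q \ge 1$ is exactly what makes this $f$ meet the defining requirements of $K_f$.

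Otherwise $(p, q)$ lies in exactly one of the regions $\{p \le 1,\, q > 1\}$, $\{p > 1,\, q \le 1\}$, or $\{p > 1,\, q > 1\}$, and I re-anchor the affine transformation at $V_2$, $V_4$, or $V_3$ respectively --- sending that vertex to the origin, its cyclic successor to $(1, 0)$, and its cyclic predecessor to $(0, 1)$. A short $2 \times 2$ computation places the image of the remaining vertex at
\[
\Bigl(\tfrac{1}{q},\, \tfrac{p+q-1}{q}\Bigr), \quad \Bigl(\tfrac{p+q-1}{p},\, \tfrac{1}{p}\Bigr), \quad \text{or} \quad \Bigl(\tfrac{q}{p+q-1},\, \tfrac{p}{p+q-1}\Bigr),
\]
respectively, and each of these points lies in $(0, 1]^2$ precisely when $(p, q)$ falls in the corresponding subregion, reducing the problem to the good case. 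The argument is essentially bookkeeping with no substantial obstacle --- the only thing to verify is that the four choices of corner vertex partition the admissible convexity region $\{p, q > 0,\, p + q > 1\}$ into the four subregions above, and on each of them exactly one choice places the fourth vertex into the unit square so that the $K_f$ representation applies.
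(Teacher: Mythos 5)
Your proposal is correct and follows essentially the same route as the paper: both reduce to Theorem~\ref{main theorem} by affinely normalizing the quadrilateral to one with vertices $(0,0)$, $(1,0)$, $(0,1)$ and a fourth vertex $(x,y)$ with $0\leq x\leq 1$, $0\leq y\leq 1$, $x+y\geq 1$. You merely supply the case analysis (choice of anchor vertex) that the paper leaves implicit, and your computations of the images of the fourth vertex check out.
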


\begin{proof}
There is an affine transformation from the quadrilateral $K$ to the quadrilateral of vertices $(0,0)$, $(1,0)$, $(0,1)$ and $(x,y)$, where $0\leq x\leq 1$, $0\leq y\leq 1$ and $x+y\geq 1$.
\end{proof}

\section*{Acknowledgment}
This work is supported by 973 programs 2013CB834201 and 2011CB302401.

\newpage

\end{document}